\numberwithin{equation}{section}
\theoremstyle{definition}
\newtheorem{thm}{Theorem}[section]
\newtheorem{lem}[thm]{Lemma}
\newtheorem{defn}[thm]{Definition}
\newtheorem{rem}[thm]{Remark}
\newcommand{\ve}{\varepsilon}
\title{Branched twist spins and knot determinants}
\author{Mizuki Fukuda}
\address{Graduate School of Science, Tohoku University, Sendai, 980-8578, Japan}
\email{sb4m24@math.tohoku.ac.jp}
\begin{document}
\maketitle
\vspace{-5mm}
\begin{abstract}
A branched twist spin is a generalization of twist spun knots, which appeared in the study of locally smooth circle actions on the $4$-sphere due to Montgomery, Yang, Fintushel and Pao.
In this paper, we give a sufficient condition to distinguish non-equivalent, non-trivial branched twist spins by using knot determinants.
To prove the assertion, we give a presentation of the fundamental group of the complement of a branched twist spin, which generalizes a presentation of Plotnick, calculate the first elementary ideals and obtain the condition of the knot determinants by substituting $-1$ for the indeterminate.
 
\end{abstract}
\section{Introduction}
An $n$-knot is an $n$-sphere embedded into the $(n+2)$-sphere.
In this paper, we assume that all knots are smooth.  
A non-trivial example of a $2$-knot was first introduced by Artin~\cite{A},
which was constructed from a $1$-knot in $S^3$ by rotating it in $S^4$ along a trivial axis.
It is called a {\it spun knot}.
One of the important properties of spun knots is 
that its knot group is isomorphic to the knot group of the $1$-knot of rotation. 
Afterwards, Zeeman generalized Artin's construction by adding a twist during the rotation,
called a {\it twist spun knot}~\cite{Z}.
Such a twisting is realized by an $S^1$-action on $S^4$ having two fixed points 
and one family of exceptional orbits. A twist spun knot is then obtained as
the union of the fixed points and the preimage of a (possibly knotted) arc in the orbit space connecting the image of 
the two fixed points.

Suppose that $S^4$ has an effective locally smooth $S^1$-action.
Let $E_m$ be the set of exceptional orbits of $\mathbb{Z}_m$-type, where $m$ is an integer greater than $1$, and $F$ be the fixed point set. 
Let $E_m^{\ast}$ and $F^{\ast}$ denote the image of the orbit map of $E_m$ and $F$, respectively.
In the study of $S^1$-action on $S^4$, 
Montgomery and Yang~\cite{MY} showed that effective locally smooth $S^1$-actions are classified into the following four types: 
(1) $\{D^3\}$, (2) $\{S^3\}$, (3) $\{S^3,m\}$, and (4) $\{(S^3,K),m,n\}$, which are called {\it orbit data}.
The $3$-ball $D^3$ and the $3$-sphere $S^3$ in these notations represent the orbit spaces.
Type (1) has no exceptional orbit and $F^{\ast}$ is the whole boundary of $D^3$. 
Type (2) has no exceptional orbit and $F^{\ast}$ consists of two points. 
Type (3) has one type of exceptional orbits $E_m$ 
whose image $E_m^{\ast}$ constitutes an arc in the orbit space $S^3$.
 The image $F^{\ast}$ is the endpoints of the arc $E_m^{\ast}$. 
Type (4) has two different types of exceptional orbits $E_m$ and $E_n$, where $m, n$ are larger than $1$ and relatively prime.
The images $E_m^{\ast}$ and $E_n^{\ast}$ are arcs with the endpoints $F^{\ast}$ such that 
$E_m^{\ast} \cup E_n^{\ast}\cup F^{\ast}$ constitutes a $1$-knot in $S^3$.
Conversely, Fintushel~\cite{Fi} and Pao~\cite{Pa} showed that there is a unique weak equivalence class of effective locally smooth $S^1$-actions on $S^4$ for each orbit data.
Here the weak equivalence of $S^1$-manifolds $M_1$ and $M_2$ is defined by a homeomorphism 
$H : M_1\to M_2$ satisfying $H(\theta x)=a(\theta)H(x)$ 
for $\theta \in S^1$ and $x\in M_1$, where $a$ is an automorphism of $S^1$.

The sets $E_m \cup F$ and $E_n \cup F$ in type (4) are diffeomorphic to 2-spheres and the image of all exceptional orbits and the fixed point set is a 1-knot $K = E^{\ast}_m \cup E^{\ast}_n \cup F^{\ast}$ in $S^3$. 
An $(m,n)$-$branched\ twist\ spin$ of $K$ is defined to be the $2$-knot $E_n \cup F$ in the case of (4), which corresponds to the case $m,n>1$, and defined similarly in type (2) and (3).
See Section $1$  for precise definition.
In this paper, by fixing the orientation of $S^4$, we generalize
the definition of $(m,n)$-branched twist spins for
$(m,n)\in\mathbb Z\times \mathbb N$
, where $|m|$ and $n$ are copirme.
Note that if $n=1$ then it is of type (3) and corresponds to
the $m$-twist spun knot, 
and if $m=0$ and $n=1$ then it is of type (2) and corresponds to the spun knot.

An $n$-knot $K$ is said to be equivalent to another $n$-knot $K^{\prime}$,
denoted by $K \sim K^{\prime}$, if there exists a smooth isotopy $H_t: S^{n+2}\to S^{n+2}$ such that $H_0={\rm id}$ and $H_1(K)=K^{\prime}$.

It is known by Hillman and Plotnick that if an $1$-knot $K$ is torus or hyperbolic then $K^{m,n}$ is not reflexive for $m\geq n$ and $m \geq 3$~\cite{HP}.
Here a $2$-knot is said to be reflexive if its Gluck reconstruction is equivalent to either its mirror image or its orientation reverse. 
Since the trivial $2$-knot is reflexive, $K^{m,n}$ is non-trivial if $K$ is torus or hyperbolic and $m\geq n$ and $m \geq 3$.
Though their result detects non-triviality of these branched twist spins, in the best of my knowledge, there is no result which distinguishes non-equivalent, non-trivial branched twist spins.

In this paper we give a sufficient condition to distinguish non-trivial branched twist spins by using knot determinants.
Our main theorem is the following:

\begin{thm}\label{thm} 
Let $K_1^{m_1 , n_1} ,  K_2^{m_2 , n_2}$ be branched twist spins constructed from $1$-knots $K_1$ and $K_2$ in $S^3$, respectively.
\begin{itemize}
\item[(1)]  
If $m_1 , m_2$ are even and $\left| \Delta_{K_1}(-1) \right| \neq \left| \Delta_{K_2}(-1) \right|$ then $K_1^{m_1 , n_1}\sim \hspace{-11pt}/\ \ K_2^{m_2 , n_2}$.
\item[(2)] 
If $m_1$ is even, $m_2$ is odd and $|\Delta_{K_1}(-1)| \neq 1$ then 
$K_1^{m_1 , n_1}\sim \hspace{-11pt}/\ \ K_2^{m_2 , n_2}$.
\end{itemize}
\end{thm}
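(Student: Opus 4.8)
The plan is to reduce the equivalence problem to the computation of an abelian invariant that can be read off from the knot group. First I would record that an equivalence of $2$-knots induces an orientation-preserving homeomorphism of the complements, hence an isomorphism of knot groups carrying meridian to meridian. Consequently the Alexander module $H_1$ of the infinite cyclic cover, together with its first elementary ideal $E_1\subset\mathbb{Z}[t^{\pm1}]$, is an invariant of the equivalence class of $K^{m,n}$; a generator of the smallest principal ideal containing $E_1$ is the Alexander polynomial $\Delta_{K^{m,n}}(t)$, well defined up to units $\pm t^k$. Evaluating at $t=-1$ then gives a well-defined nonnegative integer $|\Delta_{K^{m,n}}(-1)|$, the knot determinant, which agrees for equivalent branched twist spins. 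It therefore suffices to compute this determinant and to show that it separates the pairs in (1) and (2).

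Second, I would feed the presentation of $\pi_1(S^4\setminus K^{m,n})$ established earlier (the generalization of Plotnick's presentation) into Fox's free differential calculus. Abelianizing $\pi_1\to\langle t\rangle\cong\mathbb{Z}$, with $t$ the class of a meridian, turns the matrix of Fox derivatives into a presentation matrix of the Alexander module over $\mathbb{Z}[t^{\pm1}]$. The relations coming from the knot group of $K$ contribute, after abelianization, a block that is the classical Alexander matrix of $K$ in the variable determined by the meridional image, while the extra relations encoding the twisting and branching data (the conjugation relation $txt^{-1}=\cdots$ together with the power relation governed by $m$ and $n$) contribute the remaining rows and columns. From this matrix I would then extract the first elementary ideal $E_1$.

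The decisive step is the evaluation at $t=-1$. Substituting $t\mapsto-1$ collapses the monodromy to the covering involution, and the parity of $m$ governs whether the resulting operator is singular. I expect the outcome to be the clean dichotomy
\[
\bigl|\Delta_{K^{m,n}}(-1)\bigr|=
\begin{cases}
\bigl|\Delta_{K}(-1)\bigr| & \text{if } m \text{ is even},\\[2pt]
1 & \text{if } m \text{ is odd},
\end{cases}
\]
with the value independent of $n$, the coprimality $\gcd(|m|,n)=1$ being precisely what makes the branching parameter drop out after the substitution. Granting this, part (1) is immediate: if $m_1,m_2$ are even and $|\Delta_{K_1}(-1)|\neq|\Delta_{K_2}(-1)|$, the two knot determinants differ, so the branched twist spins cannot be equivalent. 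Part (2) follows the same way: when $m_1$ is even with $|\Delta_{K_1}(-1)|\neq1$ we obtain $|\Delta_{K_1^{m_1,n_1}}(-1)|=|\Delta_{K_1}(-1)|\neq1$, whereas $m_2$ odd forces $|\Delta_{K_2^{m_2,n_2}}(-1)|=1$, so again the determinants differ.

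The main obstacle is this last step: carrying out the Fox-calculus reduction of the nontrivial relations to a manageable presentation matrix and proving the two-case evaluation rigorously. In particular one must track the units $\pm t^k$ so that the value at $-1$ is genuinely well defined, verify that $n$ disappears after substitution (this is where $\gcd(|m|,n)=1$ is essential), and treat the twist-spin case and the genuinely branched case $n>1$ uniformly. The even/odd split ultimately reflects that $-1$ is an $|m|$-th root of unity exactly when $m$ is even, so I anticipate that the cyclotomic identity $\Phi_d(-1)=1$ for odd $d>1$ will be the arithmetic heart of the collapse in the odd case.
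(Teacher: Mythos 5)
Your first paragraph contains the step that sinks the argument. You replace the first elementary ideal $E_1(K^{m,n})$ by ``a generator of the smallest principal ideal containing $E_1$'' (the gcd of its generators) and propose to evaluate that at $t=-1$. But for branched twist spins $E_1$ is genuinely non-principal, and passing to the gcd destroys exactly the information the theorem needs. Concretely, take the $2$-twist spun trefoil ($m=2$, $n=1$, so $\beta=1$). The paper's computation of $E_1$ (Lemma~\ref{Pl}) gives generators including $\Delta_K(t)=t^2-t+1$ itself and $1-t^2$, and in fact $E_1(K^{2,1})=(t^2-t+1,\,1-t^2)=(3,\,t+1)$; equivalently, the Alexander module is $\mathbb{Z}/3$ with $t$ acting by $-1$. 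The gcd of this ideal is $1$, so your invariant evaluates to $1$, not to $|\Delta_{3_1}(-1)|=3$. Thus your claimed dichotomy ``$|\Delta_{K^{m,n}}(-1)|=|\Delta_K(-1)|$ for $m$ even'' is false, and this is not a borderline case: whenever the Alexander module of $K^{m,n}$ is finite as an abelian group (the typical situation for these fibered $2$-knots), its order ideal contains both a nonzero integer and a monic polynomial, so the gcd is automatically $1$ for \emph{both} parities of $m$, and your invariant distinguishes nothing. This breaks parts (1) and (2) alike, since both rely on the even case of the dichotomy.

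The missing idea is that one must keep the whole non-principal ideal $E_1$ as the invariant and never take gcd's. That is what the paper does: assuming $K_1^{m_1,n_1}\sim K_2^{m_2,n_2}$, the two ideals $E_1$ coincide, so each explicit generator of $E_1(K_2^{m_2,n_2})$, for instance $\Delta_{K_2}(t^{\beta_2})(1-t^{|m_2|})$, can be written as a $\mathbb{Z}[t,t^{-1}]$-linear combination of the explicit generators of $E_1(K_1^{m_1,n_1})$. Substituting $t=-1$ into that polynomial identity (not into a gcd) and bookkeeping parities ($m_1$ even forces $\beta_1$ odd, so the factors $1-t^{|m_1|}$ and $\frac{1-t^{|m_1|\beta_1}}{1-t^{\beta_1}}$ vanish while $1-t^{\beta_1}$ and $\frac{1-t^{|m_1|\beta_1}}{1-t^{|m_1|}}$ become $2$ and $\beta_1$) yields divisibility statements such as $\Delta_{K_2}(-1)/\Delta_{K_1}(-1)\in\mathbb{Z}$; the symmetric argument gives the reciprocal containment and hence (1), while $m_2$ odd (with $\beta_2$ chosen even) gives $1/\Delta_{K_1}(-1)\in\mathbb{Z}$ and hence (2). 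So the correct ``decisive step'' is a membership/divisibility argument inside the ideal at $t=-1$, not the evaluation of any single Alexander polynomial; your outline cannot be repaired without abandoning the reduction to a principal invariant.
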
 

To prove this theorem, we first give a presentation of the fundamental group of the complement of a branched twist spin, which generalizes a presentation of Plotnick, and obtain the first elementary ideal from this presentation by using Fox calculus.
Then we obtain certain equations of elementary ideals and the knot determinants appear when we substitute $-1$ for the variable $t$ of the Alexander polynomials.

This paper is organized as follows:
In Section $2$, we give the definition of a branched twist spin and the presentation of its fundamental group. 
In Section $3$, we give concrete generators of the first elementary ideal and prove Theorem~\ref{thm}.

\section{Branched twist spins}

For a point $x\in M$, an orbit $G(x)$ is called $G/H$-type if the isotropy group $G_x=\{gx=x\ |\ g\in G\}$ of $x$ is $H\subset G$.
If the $G$-action is locally smooth, each orbit of $G/H$-type has a linear slice $S$ such that $H$ acts orthogonally on $S$.
See~\cite{B} for the terminologies of transformation groups.

Suppose that $S^4$ has an effective locally smooth $S^1$-action.
We consider the $S^1$-actions of type (3) and type (4) in Section $1$.
Let $E_m$ and $E_n$ be the sets of exceptional orbits of $\mathbb{Z}_m$-type and $\mathbb{Z}_n$-type, respectively, and $F$ be the fixed point set.
Here, when $n=1$, the $S^1$-action is of type (3).
Let $E_m^{\ast}$, $E_n^{\ast}$ and $F^{\ast}$ denote the image of the orbit map of $E_m$, $E_n$ and $F$, respectively. 
Then $E_m \cup F$ and $E_n \cup F$ are diffeomorphic to 2-spheres and the image of all exceptional orbits and the fixed point set constitutes an $1$-knot $K = E^{\ast}_m \cup E^{\ast}_n \cup F^{\ast}$.

Now we give the definition of $(m,n)$-branched twist spins for $(m,n)\in\mathbb Z\times\mathbb N$.
To do this, we need to fix orientations of $S^4$ and the $S^1$-action
on $S^4$, and observe the direction of twisting in neighborhoods
of the exceptional orbits.
Let $(m,n)$ be a pair of integers in $\mathbb Z\times\mathbb N$
such that $|m|$ and $n$ are coprime. Here we further assume that $m\ne 0$.
First, we decompose the orbit space $S^3$ into five pieces.
The set $F^{\ast}$ consists of two points, say $x^{\ast}_1$ and $x^{\ast}_2$, 
and let $D^{3\ast}_i$ be a small compact ball in $S^3$ centered at $x^{\ast}_i$ for $i=1,2$. 
Choose a compact tubular neighborhood $N(K)$ of $K$ sufficiently small such that 
$N(K) \setminus \text{int}(D^{3\ast}_1 \cup D^{3\ast}_2)$ has two connected components $N_m$ and $N_n$ 
with $N_m\cap E_m^{\ast} \neq \emptyset$ and $N_n\cap E_n^{\ast} \neq \emptyset$.
Set $E^{c\ast}_m$ and $E^{c\ast}_n$ to be the connected components of $K \setminus \text{int}(D^{3\ast}_1 \cup D^{3\ast}_2)$, 
where $E_m^{c\ast}\subset N_m$ and $E_n^{c\ast}\subset N_n$, respectively.
Note that $N_m$ and $N_n$ are diffeomorphic to
$E^{c\ast}_m\times D^2$ and $E^{c\ast}_n\times D^2$, respectively.
Let $X$ be the closure of $ S^3 \setminus ((E^{c\ast}_m \times D^2) \cup (E^{c\ast}_n \times D^2) \cup D^{3\ast}_1\cup D^{3\ast}_2)$, which is the knot complement of $K$.
Then we have a decomposition $S^3 = X \cup (E^{c\ast}_m \times D^2) \cup (E^{c\ast}_n \times D^2) \cup D^{3\ast}_1\cup D^{3\ast}_2$.

\begin{figure}[htbp]
\centering
\includegraphics[scale=0.4]{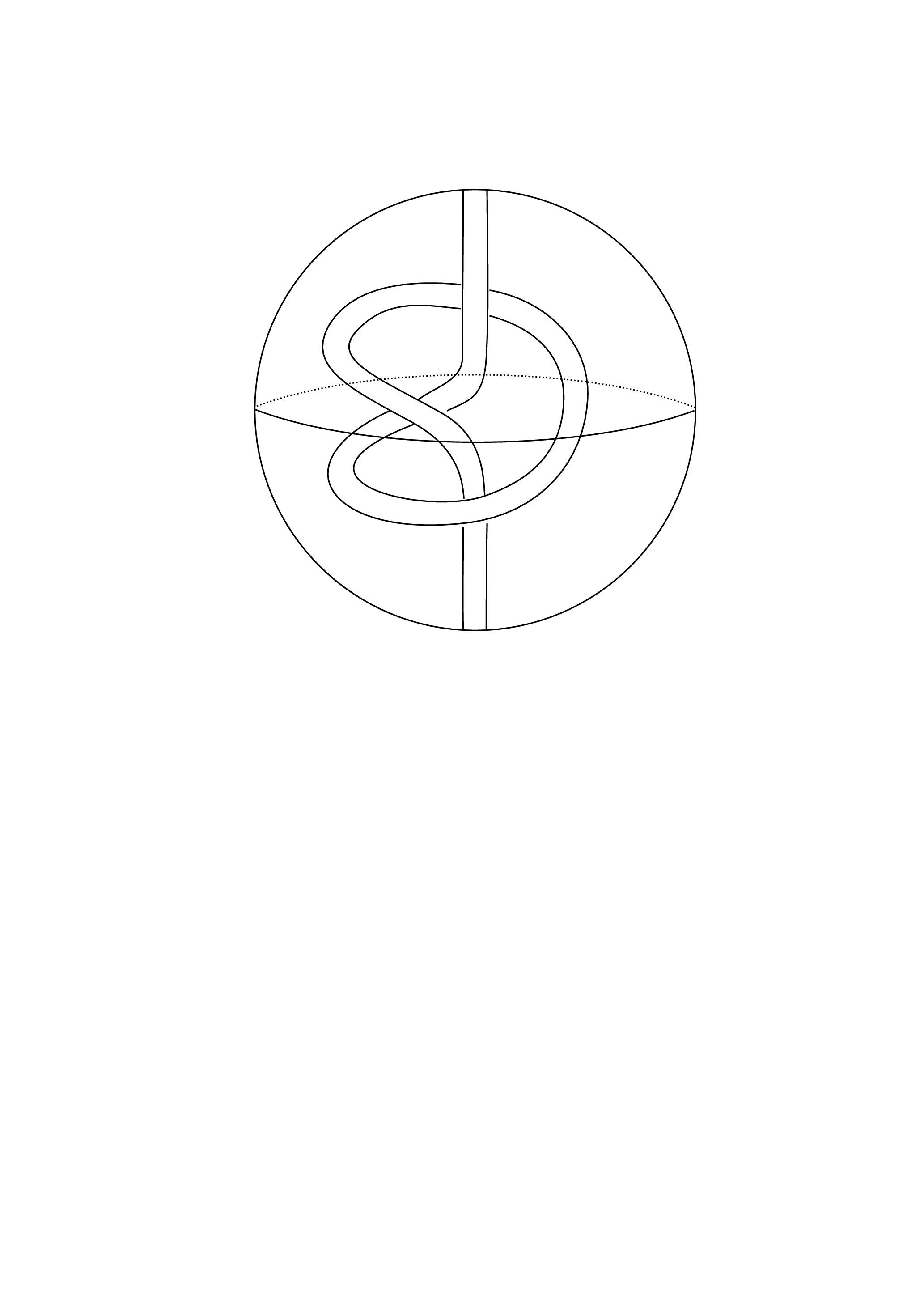}
\caption{The complement $X$}\label{comp}
\end{figure}

Let $p : S^4\to S^3$ be the orbit map.
Each point of $X$ is the image of a free orbit.
Thus $p|X\times S^1$ is a principal $S^1$-bundle.
The preimage $p^{-1}(X)$ is diffeomorphic to $X\times S^1$ and $p|X\times S^1: X\times S^1\to X$ is the first projection 
since $H^2(X ; \mathbb{Z}) = H_1(X , \partial X ; \mathbb{Z}) = 0 $ (cf.~\cite[Chapter 2]{Bl}).

Let $B^4_i$ be a linear slice at $p^{-1}(x^{\ast}_i)$, which is a closed 4-ball.
By~\cite{Fi}, $p^{-1}(D^{3\ast}_i)$ is diffeomorphic to $B^4_i$ and the action on $p^{-1}(D^{3\ast}_i)$ is $S^1$-equivalent to that on $B^4_i$.
Note that the action on $B^4_i$ is the cone of the action of $\partial B^4_i$ and so the action on $p^{-1}(D^{3\ast}_i)$ is the cone of the action of $p^{-1}(\partial D^{3\ast}_i)$.

Choosing a point $z^{\ast}_m$ in $E^{c\ast}_m$, let $D^{2\ast}_{z^{\ast}_m}$ be a 2-disk in $S^3$ centered at $z^{\ast}_m \in E^{c\ast}_m$ and transversal to $E^{c\ast}_m$.
The preimage $p^{-1}(D^{2\ast}_{z^{\ast}_m})$ is a solid torus $V_m$ whose core is the exceptional orbit of $\mathbb{Z}_m$-type.
In the same way, choosing a point $z^{\ast}_n \in E^{c\ast}_n$, 
let $D^{2\ast}_{z^{\ast}_n}$ be a 2-disk in $S^3$ centered at $z^{\ast}_n \in E^{c\ast}_n$ and transversal to $E^{c\ast}_n$.
The preimage $p^{-1}(D^{2\ast}_{z^{\ast}_n})$ is a solid torus $V_n$ whose core is the exceptional orbit of $\mathbb{Z}_n$-type.
Note that, since $V_m\cup V_n=p^{-1}(\partial D^{3\ast}_i)$ is a $3$-sphere, $p^{-1}(y^{\ast})\ (y^{\ast}\in \partial D^{2\ast}_{z_m})$ is a curve on $\partial V_m$ rotating, up to orientation, $m$ times along the meridian and $n$ times along the longitude of $V_m$,
where $n$ is determined module $m$ due to the self-homeomorphisms of $V_m$.

Now we fix the orientations of $V_m$ and $E^{c\ast}_m$ as follows:
First, fix the orientation of $S^4$ and those of orbits such that they coincide with the direction of the $S^1$-action.
These orientations determine the orientation of $V_m\times E_m^{c*}$.
Let  $(\phi,\theta)$ be a preferred meridian-longitude pair of $X$.
From the decomposition of the orbit space $S^3$,
we can see that $\phi$ is regarded as a coordinate of the second factor of $V_m\times E^{c\ast}_m$.
We assign the orientation of $V_m$ so that the orientation of $V_m\times E^{c\ast}_m$ coincides with the given orientation of $S^4$.
Finally, we choose the meridian and longitude pair $(\Theta,H)$ 
of $V_m\cong D^2\times S^1$ such that $H$ becomes the meridian
of $V_n$ in the decomposition $V_m\cup V_n=p^{-1}(\partial D_i^{3*})$
and the orbits of the $S^1$-action are in the direction 
$\ve n\Theta+|m|H$ with $n>0$, where $\ve= 1$ if $m\geq 0$ and $\ve = -1$ if $m<0$. 

\begin{figure}[htbp]
\centering
\includegraphics[scale=0.6]{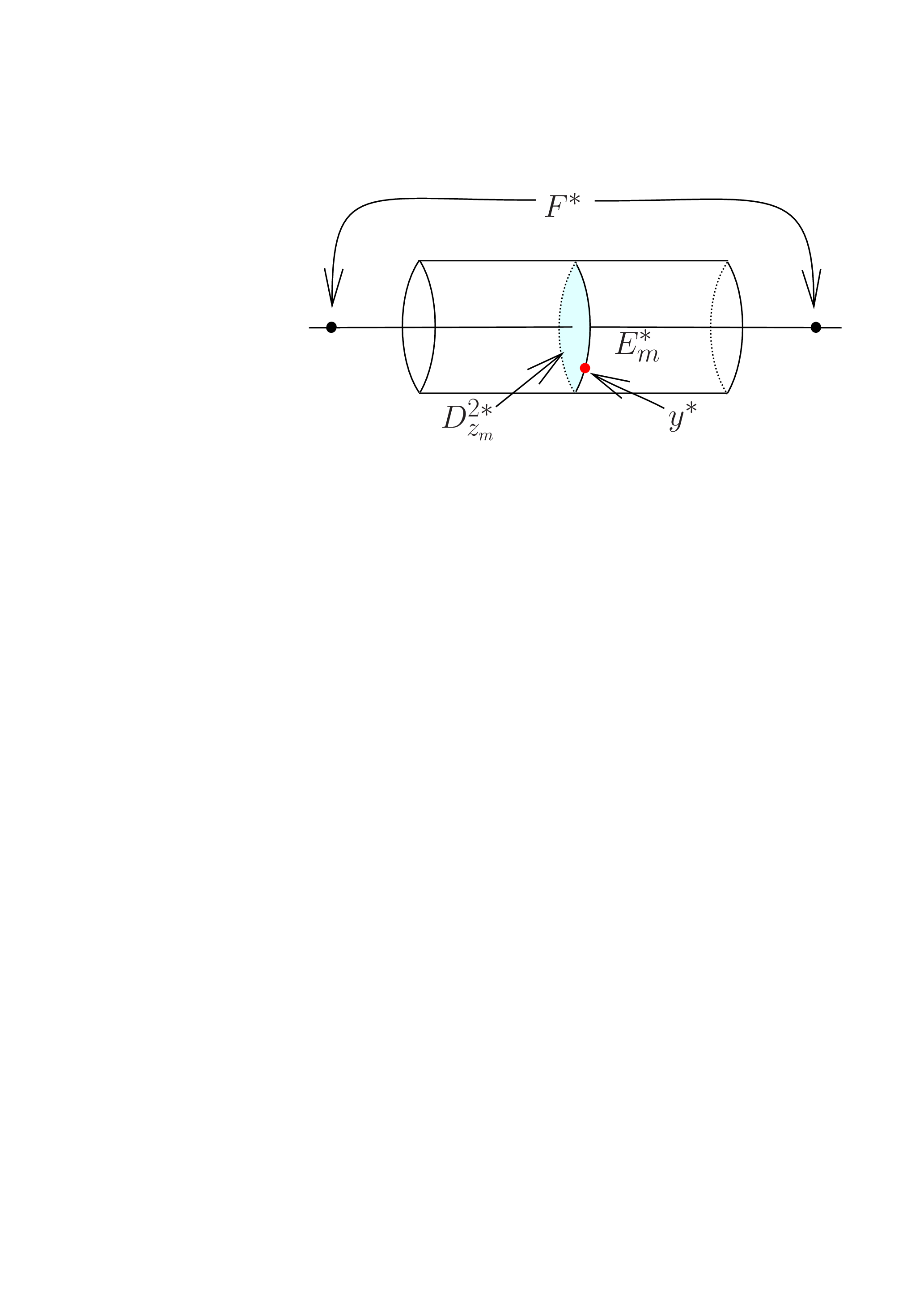}
\ \ \ \ \ \ \ 
\includegraphics[scale=0.6]{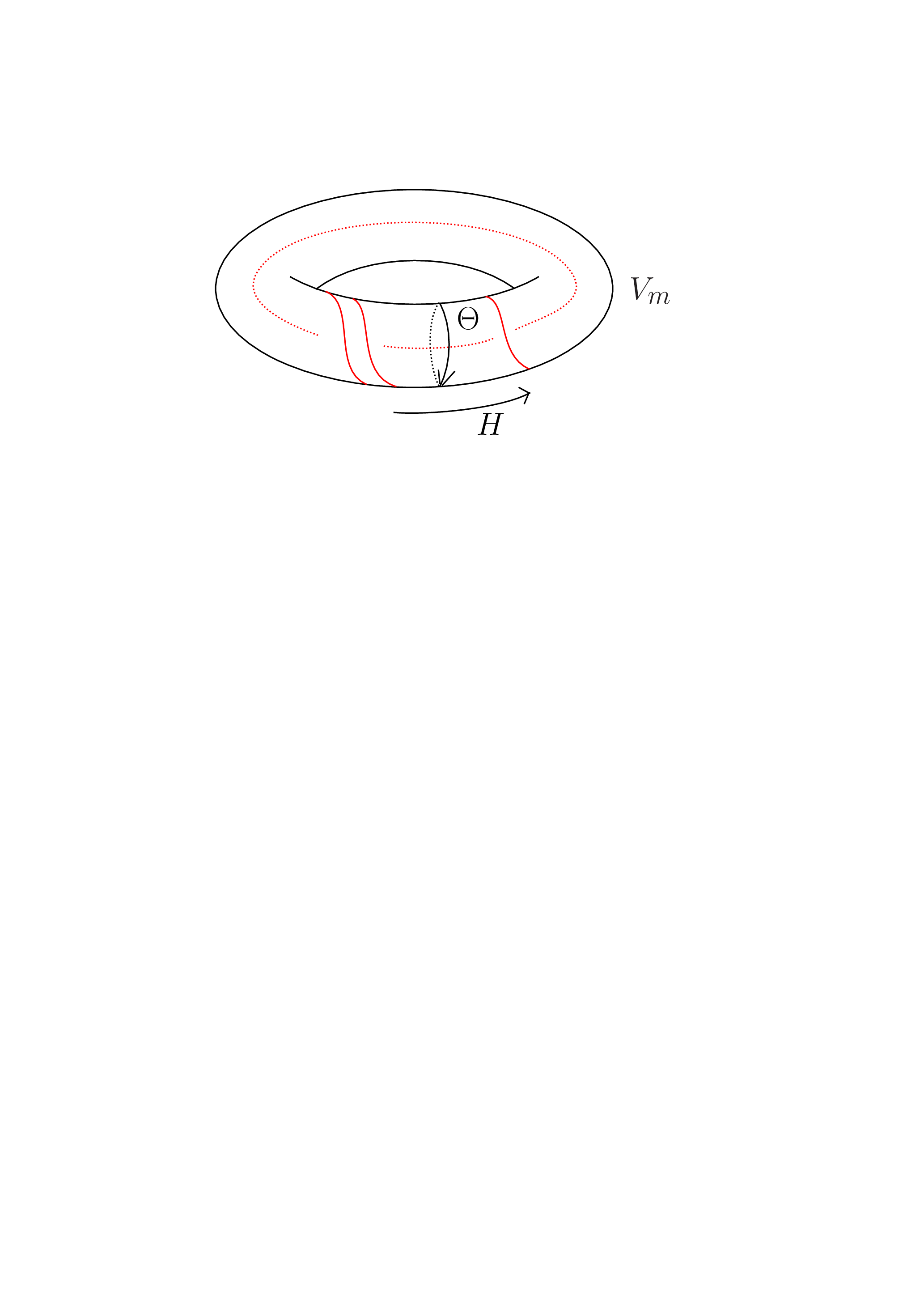}
\caption{The preimage of $y^{\ast}$}\label{exp}
\end{figure}

\begin{defn}[Branched twist spin]
For each pair $(m,n)\in\mathbb{Z}\times \mathbb{N}$ with 
$m\ne 0$ such that $|m|$ and $n$ are coprime,
let $K^{m,n}$ be the $2$-knot $E_n\cup F$.
If $(m,n)=(0,1)$ then let $K^{0,1}$ be the spun knot of $K$.
The $2$-knot $K^{m,n}$ is called an $(m,n)$-$branched\ twist\ spin$ of $K$.
\end{defn}

Note that the branched twist spin $K^{m,1}$ constructed from $\{(S^3 , K),m,1\} $ is an $m$-twist spun knot of $K$.

\begin{figure}[htbp]
\centering
\includegraphics[scale=0.5]{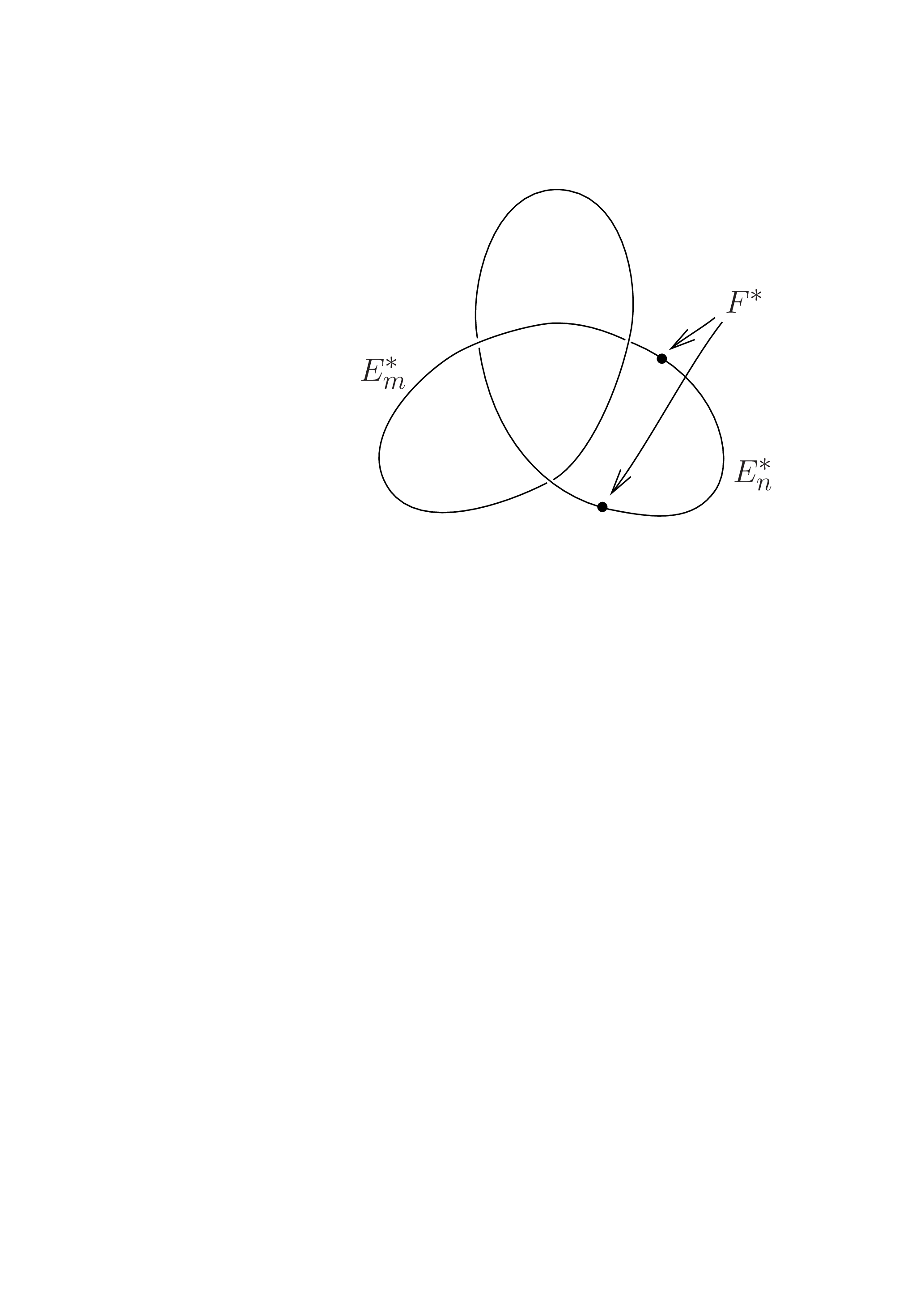}
\caption{The image of $E_m^{\ast}\cup E_n^{\ast} \cup F^{\ast}$ in $S^3$}\label{test}
\end{figure}

Let $K$ be an $n$-knot in $S^{n+2}$. 
The fundamental group of the knot complement $S^{n+2}\setminus \text{int}N(K)$ of an $n$-knot $K$ is called the {\it knot group} of $K$, 
where $N(K)$ is a tubular neighborhood of $K$.
\begin{lem}\label{P}
Let $K$ be an $1$-knot and $K^{m,n}$ be the $(m,n)$-branched twist spin of $K$ with $(m,n)\in\mathbb Z\times \mathbb N$, where $|m|$ and $n$ are coprime.
Let $\langle x_1,\ldots,x_s\mid r_1,\ldots,r_t\rangle$
be a presentation of the knot group of $K$ such that $x_1$ is a meridian.
Then the knot group of $K^{m,n}$ has the presentation
\begin{equation}\label{eq1}
\pi_1(S^4 \setminus \text{int}N(K^{m , n})) \cong 
\langle x_1,\ldots,x_s,h\ |\ r_1,\ldots,r_t,x_ihx^{-1}_ih^{-1}, x_1^{|m|} h^{\beta} \rangle,
\end{equation} 
where $\beta$ is an integer such that
$n\beta\equiv \ve$ (mod $m$).
Recall that $\ve = 1$ if $m\geq 0$ and $\ve = -1$ if $m<0$.
\end{lem}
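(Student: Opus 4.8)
The plan is to compute $\pi_1(S^4\setminus\operatorname{int}N(K^{m,n}))$ by applying the Seifert--van Kampen theorem to the decomposition of $S^4$ furnished by the orbit map $p\colon S^4\to S^3$, exploiting the fact that the $\mathbb Z_m$-exceptional locus lies in the complement while $E_n\cup F$ is deleted. First I would isolate the two pieces that carry all the topology: the free part $A=p^{-1}(X)=X\times S^1$, and the piece $B=p^{-1}(E_m^{c\ast}\times D^2)$ over the $\mathbb Z_m$-arc together with the two solid-torus caps coming from $p^{-1}(\partial D_i^{3\ast})=V_m\cup V_n$. The remaining pieces are inessential: the region $p^{-1}(E_n^{c\ast}\times D^2)\setminus N(E_n)$ is a product $(\partial V_n\times I)\times E_n^{c\ast}$ that collapses onto its trace on $A$, and each $p^{-1}(D_i^{3\ast})\setminus N(F)$ is a cone on $\partial B_i^4$ with the $\mathbb Z_n$-core removed, which deformation retracts onto the solid torus $V_m\subset\partial B_i^4=V_m\cup V_n$. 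Hence the complement deformation retracts onto $A$ with a single solid torus $V_m$ glued along the torus $\partial V_m$; making this retraction precise, and checking that the deleted neighbourhood contributes no further $\pi_1$, would be the first task.

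Next I would read off $\pi_1(A)$. Since $p^{-1}(X)\cong X\times S^1$ is the trivial bundle (as recorded in the excerpt, using $H^2(X;\mathbb Z)=0$), one has $\pi_1(A)\cong\pi_1(X)\times\langle h\rangle$, where $h$ is the class of an $S^1$-fibre; this yields the generators $x_1,\dots,x_s,h$, the relations $r_1,\dots,r_t$ inherited from the knot group of $K$, and the commuting relations $x_ihx_i^{-1}h^{-1}$ because $h$ is central in a direct product. Gluing in the solid torus $V_m$ then contributes, via van Kampen, exactly one relation: its meridian $\Theta$ bounds a meridian disk in $V_m$ and so is trivial in $\pi_1(V_m)=\langle H\rangle$, while $H$ is already present in $\pi_1(A)$, so no new generator appears and the amalgamation simply imposes $\Theta=1$. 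Thus the whole content reduces to expressing the class $[\Theta]\in H_1(\partial V_m)$ in terms of the two curves $\phi=x_1$ and $h$ lying on $\partial V_m$.

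For this last step I would use the homological data fixed in the construction: on $\partial V_m$ the regular orbit satisfies $[h]=\ve n\,\Theta+|m|\,H$, while the preferred meridian $\phi$ of $K$ lifts to a section of the circle bundle $\partial V_m\to\partial D^{2\ast}_{z^\ast_m}$ and therefore, together with $h$, forms an oriented basis of $H_1(\partial V_m)$. Writing $\phi=a\Theta+qH$ with $a|m|-q\ve n=1$ and inverting the change of basis gives $\Theta=|m|\,\phi-q\,h$, so that $\Theta=1$ becomes $x_1^{|m|}h^{-q}=1$; setting $\beta=-q$ and reducing $a|m|-q\ve n=1$ modulo $m$ gives $-q\ve n\equiv1$, hence $n\beta\equiv\ve\pmod m$, which is the asserted congruence. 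Since $x_1$ and $h$ commute on the torus $\partial V_m$, their images in $\pi_1(A)$ satisfy the same identity, and van Kampen produces exactly the presentation~\eqref{eq1}. The main obstacle is the bookkeeping of the first paragraph combined with the orientation conventions of the third: one must verify that the fixed orientations of $V_m$, $E^{c\ast}_m$ and $h$ force the change-of-basis determinant to equal $+1$ rather than $-1$, since this sign is precisely what distinguishes $n\beta\equiv\ve$ from $n\beta\equiv-\ve\pmod m$.
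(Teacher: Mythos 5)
Your proposal is correct and follows essentially the same route as the paper: decompose the complement as $X\times S^1$ glued to the solid-torus piece over the $\mathbb{Z}_m$-arc, apply van Kampen so that killing the meridian disk of $V_m$ yields the single extra relation, and use the orientation conventions (orbit direction $\ve n\Theta+|m|H$ plus the section/fiber basis $\{\phi,h\}$, with unit determinant change of basis) to identify that relation as $x_1^{|m|}h^{\beta}$ with $n\beta\equiv\ve\pmod m$. The two points you flag as remaining tasks --- collapsing the inessential pieces over $E_n^{c\ast}$ and the fixed-point balls, and pinning the determinant to $+1$ --- are exactly what the paper handles by its stated orientation conventions and its (terser) identification of the complement with $X\times S^1\cup_f V_m\times E_m^{c\ast}$.
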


\begin{proof}

The knot complement of $K^{m,n}$ is given by $X\times S^1 \cup_f V_m\times E_m^{c\ast}$, where 
the map $f:\partial D^{2\ast}\times E_m^{c\ast}\times S^1 \to \partial V_m\times E_m^{c\ast}$ is the attaching map specified by the decomposition explained in this section.

Let $h$ is the coordinate of the second factor of  $X\times S^1$ whose direction coincides
with the $S^1$-action.
By the above discussion of the orientations, 
the induced map $f_{\ast}:H_1(\partial D^{2\ast}\times E_m^{c\ast}\times S^1) \to H_1(V_m\times E_m^{c\ast})$ must satisfy

$$
(f_{\ast}([\theta]) , f_{\ast}([h])) = 
([\Theta] , [H])
\left(
\begin{array}{cc}
\alpha & \ve n\\
-\beta &  |m|
\end{array}
\right),
$$
where $\alpha$ and $\beta$ are integers satisfying $m\alpha + n\beta = \ve$.

The inverse of $f_{\ast}$ satisfies the relation
$$
(f^{-1}_{\ast}([\Theta]) , f^{-1}_{\ast}([H])) = 
([\theta] , [h])
\left(
\begin{array}{cc}
|m| & -\ve n\\
\beta & \alpha
\end{array}
\right),
$$
hence $f_{\ast}^{-1}([\Theta]) = |m|[\theta] + \beta[h]$ and $f_{\ast}^{-1}([H]) =  -\ve n[\theta] + \alpha[h]$ hold. 
Since $f_{\ast}^{-1}([\Theta])$ is null-homologous in $V_m \times E^{c\ast}_m$, 
$$
\pi_1(S^4 \setminus \text{int}N(K^{m , n})) \cong 
\langle x_1,\ldots,x_s,h\ |\ r_1,\ldots,r_t,x_ihx^{-1}_ih^{-1},x_1^{|m|} h^{\beta} \rangle
$$
holds by Van Kampen's theorem. 
\end{proof}

\begin{rem}\label{meridian}
Since $\partial D_i^4$ is the union of  $V_m$ and $V_n$, the meridian $\mu$ of $K^{m , n}$ is that of $V_n$, named $H$ in the above proof. 
From the relation $f_{\ast}^{-1}([H]) = -\ve n[\theta] + \alpha[h]$, we have $\mu = \theta^{-\ve n}h^{\alpha}$.
\end{rem}

\section{Proof of main theorem}
Let $K$ be an $n$-knot. 
Assume that a presentation $ \langle x_1, \ldots , x_l\ |\ r_1, \ldots , r_k\rangle$ of $\pi_1(S^{n+2}\setminus \text{int}N(K))$ is given.
Let $a:\pi_1(X)\to H_1(X)\cong \mathbb Z\langle t\rangle$
be the quotient map. This map induces a map
$a_{\ast}:\mathbb Z\pi_1(X)\to \mathbb Z[t,t^{-1}]$ naturally.
The matrix $A$ defined by 
$$
A=\left( a_{\ast} \left(\cfrac{\partial r_i}{\partial x_j}   \right)   \right)
$$
is called the $Alexander\ matrix$ of $K$.
Note that $H_1(X) \cong \mathbb{Z}$ holds for all $n$-knots, and $a$ takes a meridian of $K$ to the generator $t$ of $\mathbb{Z}$.

In the case of a $1$-knot, it is very common to use
a Wirtinger presentation for describing the knot group of $K$.
Then, the quotient map of the abelianization
maps each generator to the generator $t$ of $H_1(X;\mathbb Z)$.

Two Alexander matrices $A$ and $A^{\prime}$ is said to be equivalent, denoted by $A\sim A^{\prime}$, 
if $A^{\prime}$ is obtained from $A$ by the following operations:
(1)Permuting rows or permuting columns. 
(2) Adjoining to a row or a column a linear combination of other rows or columns, respectively.
(3)
$
A\to 
\left(
\begin{array}{ccc}
\text{$A$}\\
0
\end{array}
\right).
$
(4)  
$
A\to 
\left(
\begin{array}{ccc}
\text{$A$}&0\\
0&1
\end{array}
\right).
$

For the Alexander matrix $A\in M(p,q,\mathbb Z[t,t^{-1}])$ of $K$ over $\mathbb Z[t,t^{-1}]$ and non-negative integer $k$, 
the $k$-th  $elementary\ ideal$ $E_k(A)$ of $K$ is defined as follows: 
\begin{itemize}
 \item $E_k(A)$ is the ideal generated by determinants of all $(q-k) \times (q-k)$-submatrices of  $A$ if $0 < q-k \leq p$,
 \item $E_k(A)=0$  if $q-k > p$,
 \item $E_k(A)=\mathbb{Z}[t,t^{-1}]$ if $q-k \leq 0$.
\end{itemize}

For all $k$, we have $E_k \subset E_{k+1}$.
If $K\sim K^{\prime}$, a presentation of the knot group of $K^{\prime}$ is obtained from that of $K$ by Tietze transformations. 
Since Tietze transformations preserve the equivalence class of Alexander matrices, we denote $E_k(A)$ by $E_k(K)$.

Now we study the elementary ideals of the knot group of $K^{m,n}$.
Hereafter we fix a Wirtinger presentation of the knot group $K$ as
\begin{equation}\label{Wir}
\langle x_1,\ldots,x_l\ |\ r_1,\ldots,r_l\rangle.
\end{equation}
Then, 
\begin{equation}\label{eq2}
\pi_1(S^4 \setminus \text{int}N(K^{m , n})) \cong 
\langle x_1 , \ldots , x_l , h\ |\ r_1 , \ldots , r_l , x_ihx_i^{-1}h^{-1}, x_1^{|m|}h^{\beta} \rangle
\end{equation}
holds by applying Lemma~\ref{P}.
Let $r_{l+i}$ be $x_ihx_i^{-1}h^{-1}$ for each $i(1\leq i \leq l)$, and let $r_{2l+1}$ be $x_1^{|m|}h^{\beta}$.
Then, using the induced map $a_{\ast}$ and Fox calculus for this presentation, 
the Alexander matrix $A$ of $K^{m,n}$ is written as

$$A=
\left(
a_{\ast}\left(\frac{\partial r_i}{\partial x_j}\right)
\right)
.
$$

\begin{lem}\label{Pl}
 The $k$-th elementary ideal of $K^{m,n}$ has the following property:
\begin{itemize}
\item[(1)]
$E_0(K^{m , n}) = 0$.
\item[(2)] 
Let $\beta$ be a positive integer satisfying $n\beta \equiv \ve\ (\text{mod}\ m)$.
The ideal $E_1(K^{m , n})$ is the ideal generated by the following elements:
\begin{equation*}
\left\{
\begin{split}
&\Delta_{K }(t^{\beta})\left\{
(1-t^{|m|}) ,
(1-t^{\beta}) , 
\frac{1-t^{|m|\beta}}{1-t^{\beta}} , 
\frac{1-t^{|m|\beta}}{1-t^{|m|}}
\right\} , \\
&G_i(t^{\beta})(1-t^{|m|})\left\{
(1-t^{|m|}) , 
(1-t^{\beta}) , 
 \frac{1-t^{|m|\beta}}{1-t^{\beta}} ,
 \frac{1-t^{|m|\beta}}{1-t^{|m|}} 
 \right\} , \\
&(1 - t^{|m|})^{l-1}\left\{
(1-t^{|m|}) , 
(1-t^{\beta}) , 
 \frac{1-t^{|m|\beta}}{1-t^{\beta}} ,
 \frac{1-t^{|m|\beta}}{1-t^{|m|}} 
 \right\},
\end{split}
\right.
\end{equation*}
where $l$ is the number of generators of the knot group of $K$ and $G_i(t)$ are generators of $E_2(K)$.
Especially, $E_1(K^{m , n}) \neq 0$.
\end{itemize}
Here the notation $P\{Q_1,Q_2,Q_3,Q_4\}$ means $PQ_1,PQ_2,PQ_3,PQ_4$.
\end{lem}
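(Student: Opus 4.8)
The plan is to read off the Alexander matrix $A$ of the presentation (\ref{eq2}) by Fox calculus and then extract its elementary ideals. Under the abelianization forced by the relations, every Wirtinger generator $x_i$ maps to $t^{\beta}$ and $h$ maps to $t^{-|m|}$ (this is the only choice, up to orientation of $t$, that kills $x_1^{|m|}h^{\beta}$, and it is consistent because $\gcd(|m|,\beta)=1$); the appearance of $\Delta_K(t^{\beta})$ in the statement confirms it. Ordering the columns as $x_1,\dots,x_l,h$, the rows from $r_1,\dots,r_l$ give the block $(A_K(t^{\beta})\mid 0)$, where $A_K$ is the Alexander matrix of $K$; the rows $r_{l+i}=[x_i,h]$ give $((1-t^{-|m|})I_l\mid (t^{\beta}-1)\mathbf 1)$; and the last row $x_1^{|m|}h^{\beta}$ gives $(\tfrac{1-t^{|m|\beta}}{1-t^{\beta}}\,\mathbf e_1^{\mathsf T}\mid \tfrac{1-t^{|m|\beta}}{1-t^{|m|}})$ up to a unit. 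The four entries of these last two row-types are exactly the four multipliers $\{1-t^{|m|},\,1-t^{\beta},\,\tfrac{1-t^{|m|\beta}}{1-t^{\beta}},\,\tfrac{1-t^{|m|\beta}}{1-t^{|m|}}\}$; write $J$ for the ideal they generate.

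For (1) I would invoke the fundamental identity $\sum_j\tfrac{\partial r}{\partial x_j}(x_j-1)=r-1$: applied to every relation and pushed through $a_*$, it shows that the column vector with entries $t^{\beta}-1$ (in the $x$-slots) and $t^{-|m|}-1$ (in the $h$-slot) is killed by $A$. Since this vector is nonzero over $\mathbb Z[t,t^{-1}]$, the matrix $A$ has column rank at most $l$, so every $(l+1)\times(l+1)$ minor vanishes and $E_0(K^{m,n})=0$.

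For (2) the cleanest route is to simplify the relation module $M=\mathrm{coker}(A)$ before taking minors. Each row of $A_K(t^{\beta})$ has zero sum (the free-calculus identity for $K$ with all meridians sent to $t^{\beta}$, equivalently $E_0(K)=0$), so the substitution $f_i:=e_{x_i}-e_{x_1}$ decouples the presentation into relations on $\{e_{x_1},e_h\}$ and relations on $\{f_2,\dots,f_l\}$, giving $M\cong M_e\oplus M_f$. Here $M_e$ is presented by the $2\times2$ matrix with rows $(1-t^{-|m|},\,-(1-t^{\beta}))$ and $(\tfrac{1-t^{|m|\beta}}{1-t^{\beta}},\,\tfrac{1-t^{|m|\beta}}{1-t^{|m|}})$, so $\mathrm{Fitt}_1(M_e)=J$ while $\mathrm{Fitt}_0(M_e)$ is its determinant, which equals $(1-t^{|m|})(1-t^{|m|\beta})$ up to a unit; and $M_f$ is presented by the matrix stacking $(1-t^{-|m|})I_{l-1}$ over $\bar A_K$, where $\bar A_K$ is $A_K(t^{\beta})$ with its first column deleted. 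Laplace expansion along the scalar block, using that the columns of $A_K(t^{\beta})$ sum to zero (so minors avoiding the first column still generate all the $E_{\bullet}(K)(t^{\beta})$), gives $\mathrm{Fitt}_j(M_f)=\sum_{k\ge0}(1-t^{|m|})^{k}E_{j+k+1}(K)(t^{\beta})$. The direct–sum formula $E_1(K^{m,n})=\mathrm{Fitt}_0(M_e)\,\mathrm{Fitt}_1(M_f)+\mathrm{Fitt}_1(M_e)\,\mathrm{Fitt}_0(M_f)$ then collapses: since $\mathrm{Fitt}_0(M_e)$ carries the factor $1-t^{|m|\beta}\in J$, the first summand is absorbed into $J\,\mathrm{Fitt}_0(M_f)$, leaving $E_1(K^{m,n})=J\,\mathrm{Fitt}_0(M_f)$. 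The $k=0,1$ and top $k=l-1$ terms of $\mathrm{Fitt}_0(M_f)$ contribute exactly the three families $\Delta_K(t^{\beta})J$, $(1-t^{|m|})G_i(t^{\beta})J$, and $(1-t^{|m|})^{l-1}J$ (the last because $M_f$ is annihilated by $1-t^{|m|}$ and generated by $l-1$ elements); in particular $E_1(K^{m,n})\neq0$.

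The main obstacle is the remaining reconciliation: $\mathrm{Fitt}_0(M_f)$ a priori also contains the intermediate terms $(1-t^{|m|})^{k}E_{k+1}(K)(t^{\beta})$ for $2\le k\le l-2$, which are not literally among the listed families, and one must show they are already generated by them. I expect to handle this by exploiting that $M_f$ is annihilated simultaneously by $1-t^{|m|}$ and by $\Delta_K(t^{\beta})$ — the latter because $\bar A_K$ has full column rank with maximal minor $\Delta_K(t^{\beta})$ — together with $\gcd(|m|,\beta)=1$ to control the interaction between $1-t^{|m|}$ and $\Delta_K(t^{\beta})$; this should force the telescoping $(1-t^{|m|})^{k}E_{k+1}(K)(t^{\beta})\subseteq \bigl(\Delta_K(t^{\beta})\bigr)+(1-t^{|m|})E_2(K)(t^{\beta})+\bigl((1-t^{|m|})^{l-1}\bigr)$, after which multiplication by $J$ yields the stated equality. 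Carrying out this collapse carefully, rather than the routine Fox-calculus bookkeeping, is where the real work lies.
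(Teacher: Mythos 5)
Your strategy coincides with the paper's: compute the Fox matrix of the presentation~\eqref{eq2}, use the fact that each Wirtinger row sums to zero to split off the $(x_1,h)$-part, and identify the surviving minors. Your direct sum $M_e\oplus M_f$ is exactly the paper's matrix reduction $A\to A'$ expressed in terms of Fitting ideals (your $t\mapsto t^{-1}$ normalization is immaterial), and your proof of (1) via the Fox fundamental identity is different from the paper's explicit $2\times 2$ computation but perfectly valid. There is, however, a computational error in your treatment of $M_e$: the $(2,2)$-entry of its presentation matrix is $t^{|m|}\frac{1-t^{|m|\beta}}{1-t^{|m|}}$, and you may not discard the unit $t^{|m|}$ \emph{entrywise} before taking a determinant. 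With the correct entry the determinant vanishes identically, so $\mathrm{Fitt}_0(M_e)=0$, not $(1-t^{|m|})(1-t^{|m|\beta})$; this identical vanishing is precisely what the paper uses to prove (1), and your nonzero value actually contradicts your own part (1), since it would give $E_0(K^{m,n})=\mathrm{Fitt}_0(M_e)\,\mathrm{Fitt}_0(M_f)\neq 0$. The slip is harmless for part (2): with $\mathrm{Fitt}_0(M_e)=0$ the direct-sum formula immediately gives $E_1(K^{m,n})=J\cdot\mathrm{Fitt}_0(M_f)=J\cdot\sum_{k\ge 0}(1-t^{|m|})^{k}E_{k+1}(K)(t^{\beta})$, which is correct and is exactly what the paper's matrix $A'$ yields.

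The gap you flag at the end --- collapsing the intermediate terms $(1-t^{|m|})^{k}E_{k+1}(K)(t^{\beta})$, $2\le k\le l-2$, into the three listed families --- is genuine, and two things should be said about it. First, the paper does not close it either: its proof says only ``by checking all the determinants of $l\times l$ submatrices of $A'$'' and silently omits precisely these minors. Second, the telescoping containment you hope to prove is false in general, so no argument can close this gap. Take $K$ the granny knot ($l=6$, $E_1(K)=(\delta^2)$, $E_2(K)=(\delta)$, $E_3(K)=(1)$, where $\delta=t^2-t+1$) and $(m,n)=(2,1)$, so one may take $\beta=1$; then the multiplier $\frac{1-t^{|m|\beta}}{1-t^{|m|}}$ equals $1$, hence $J=(1)$. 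Your full formula gives $E_1(K^{2,1})=(\delta^2,(1-t^2)\delta,(1-t^2)^2)=(3,t+1)^2$, and this is the correct ideal: by Zeeman's fibration theorem $K^{2,1}$ fibers with fiber the punctured double branched cover, so its Alexander module is $(\mathbb{Z}/3)^2$ with $t$ acting by $-1$. The three families of the lemma instead generate $(\delta^2,(1-t^2)\delta,(1-t^2)^5)$, whose reduction mod $3$ is $((t+1)^3)$ and hence does not contain $(t+1)^2$. So the intermediate term $(1-t^2)^2E_3(K)=((1-t^2)^2)$ cannot be dispensed with: the correct statement of the lemma is your full sum, and the printed generating set is adequate only when the extra terms happen to be absorbed (for instance when $E_2(K)=(1)$). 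Reassuringly, none of this affects Theorem~\ref{thm}, because every extra generator carries the factor $(1-t^{|m|})^{2}$ and therefore vanishes at $t=-1$ whenever the relevant $m_i$ is even, which covers every case actually used in that proof.
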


\begin{proof}
From Lemma~\ref{P}, we have the presentation~\eqref{eq2}
of the knot group of $K^{m,n}$.
By Remark~\ref{meridian}, the meridian of $K^{m,n}$ is written as $x^{-\ve n}_1 h^{\alpha}$, where $m\alpha +n\beta = \ve $.
Since the quationt map $a$ sends $x^{-\ve n}_1 h^{\alpha}$ to the generator $t$ of  $H_1(X ; \mathbb{Z})$,
we have $a(x_1) = \cdots = a(x_l) =t^{-\beta} , a(h) = t^{|m|}$. 
Then the Alexander matrix $A$ obtained from~\eqref{eq2} is  given by
\begin{equation}\label{eq3}
\scalebox{0.7}{$\displaystyle
\text{\Huge{$A\ =$}} 
\ \ \ 
\left(
\begin{array}{cccccccc}
&&&&0&\\
&&&&\vdots&\\
&\multicolumn{2}{c}{$\mbox{\smash{\Huge $B$}}$} &&\vdots\\
&&&&\vdots&\\
&&&&0&\\ 
1-t^{|m|}&&&&t^{-\beta}-1&\\ 
&\ddots&&\text{\Huge{$O$}}&\vdots\\
\text{\Huge{$O$}}&&\ddots&&\vdots\\
&&&1-t^{|m|}&t^{-\beta}-1\\
\cfrac{1-t^{-|m|\beta}}{1-t^{-\beta}}&0&\cdots&0&\cfrac{t^{-|m|\beta}(1-t^{|m|\beta})}{1-t^{|m|}}
\end{array}
\right),
$}
\end{equation}
where 
$B$ is the Alexander matrix of $K$ obtained from the Wirtinger presentation~\eqref{Wir} by replaced $t$ with $t^{-\beta}$.
Since $r_i$ is $x_i x_j x^{-1}_i x^{-1}_k$, for each $i\ (1\leq i \leq l)$, entries in the $i$-th row of $A$ satisfy
$$
a_{\ast}\left(\frac{\partial x_i x_j x^{-1}_i x^{-1}_k}{\partial x_p}\right) =
\left\{
\begin{array}{cc}
1- t^{-\beta}&\ \ \  (p = i)\\
t^{-\beta}&\ \ \  (p = j)\\
-1&\ \ \  (p = k)\\
0&\ \ \ \  (p: \text{others})
\end{array} 
\right.
.
$$
Therefore we have
$\sum^l_{p = 1} a_{\ast}\left(\frac{\partial x_i x_j x^{-1}_i x^{-1}_k}{\partial x_p}\right)= 0.$
Hence, $A$ is equivalent to 
\begin{eqnarray*}
\scalebox{0.7}{$\displaystyle
\text{\Huge{$A^{\prime}\ =$}} 
\ \ \ 
\left(
\begin{array}{cccccr}
0&&&&0&\\
\vdots&&&&\vdots&\\
\vdots&\multicolumn{3}{c}{$\mbox{\smash{\Huge $\hspace{10pt} B_1$}}$} &\vdots\\
\vdots&&&&\vdots&\\
0&&&&0& \\ 
1-t^{|m|}&&&&t^{-\beta}-1&\\
&\ddots&&\text{\Huge{$O$}}&0&\\
&&\ddots&&\vdots\\
\text{\Huge{$O$}}&&&&\vdots\\
&&&1-t^{|m|}&0\\
\cfrac{1-t^{-|m|\beta}}{1-t^{-\beta}}&0&\cdots&0&\cfrac{t^{-|m|\beta}(1-t^{|m|\beta})}{1-t^{|m|}}
\end{array}
\right),
$}
\end{eqnarray*}
where $B_1$ is an $l \times (l-1)$ matrix.
 An $(l + 1) \times (l + 1)$ submatrix of $A^{\prime}$ should contain both the $(l + 1)$-th row and the $(2l + 1)$-th row
 if it's determinant is not zero. Then any $(l+1)\times (l+1)$ submatrix has the form
$$
\left(
\begin{array}{cccccr}
0&&&&0\\
\vdots&&\multicolumn{1}{c}{$\mbox{\smash{\Huge $\hspace{10pt} B_2$}}$} &&\vdots\\
0&&&&0\\
1-t^{|m|}& 0&\cdots&0&t^{-\beta}-1\\
0&&&&0\\
\vdots&&\multicolumn{1}{c}{$\mbox{\smash{\Huge $\hspace{10pt} B_3$}}$} &&\vdots\\
0&&&&0\\
\frac{1-t^{-|m|\beta}}{1-t^{-\beta}}&0&\cdots&0&\frac{t^{-|m|\beta}(1-t^{|m|\beta})}{1-t^{|m|}}
\end{array}
\right),
$$
where 
$
\left(
\begin{array}{ccc}
\text{$B_2$}\\
\text{$B_3$}
\end{array}
\right)
$
 is an $(l-1)\times(l-1)$ matrix, and its determinant has the factor
$$
\left|
\begin{array}{cc}
1-t^{|m|}&t^{-\beta}-1\\
\frac{1-t^{-|m|\beta}}{1-t^{-\beta}}&\frac{t^{-|m|\beta}(1-t^{|m|\beta})}{1-t^{|m|}}
\end{array}
\right| 
$$
which is equal to zero.
Thus $E_0(K^{m , n}) = 0$ holds.

By checking all the determinants of $l \times l$ submatrices of $A^{\prime}$,
we can see that $E_1(K^{m,n})$ is generated by the following terms:

\begin{equation*}
\left\{
\begin{split}
&\Delta_{K }(t^{-\beta})\left\{
(1-t^{|m|}) , 
(t^{-\beta}-1) ,
\frac{1-t^{-|m|\beta}}{1-t^{-\beta}} , 
\frac{t^{-|m|\beta}(1-t^{|m|\beta})}{1-t^{|m|}}
\right\}  ,\\
&G_i(t^{-\beta})(1-t^{|m|})\left\{
(1-t^{|m|}) ,
(t^{-\beta}-1) ,
\frac{1-t^{-|m|\beta}}{1-t^{-\beta}} ,
\frac{t^{-|m|\beta}(1-t^{|m|\beta})}{1-t^{|m|}}
\right\}  ,\\
&(1 - t^{|m|})^{l-1}\left\{
(1-t^{|m|}) ,
(t^{-\beta}-1) ,
\frac{1-t^{-|m|\beta}}{1-t^{-\beta}} ,
\frac{t^{-|m|\beta}(1-t^{|m|\beta})}{1-t^{|m|}}
\right\} .
\end{split}
\right.
\end{equation*}
Here 
$\Delta_K(t)$ is the Alexander polynomial of $K$, which is given, up to unit, as the common factor of all determinants of $(l-1)\times(l-1)$ submatrices of $B_1$,
 and $G_i(t)$ are the generators of $E_2(K)$.
Thus we have the assertion.
\end{proof}

\begin{proof}[Proof of Theorem~\ref{thm}]
We prove the assertion by contraposition. 
Suppose that $K_1^{m_1 , n_1} \sim K_2^{m_2 , n_2}$.
Let $G^i_j(t)$ be the generators of $E_2(K_i)$.
From Lemma~\ref{Pl}, 
for each $i=1,2$, $E_1(K^{m_i,n_i})$ is generated by

\begin{equation*}
\left\{
\begin{split}
&\Delta_{K_i}(t^{\beta_i})\left\{
(1-t^{|m_i|}) , 
(1-t^{\beta_i}) , 
\frac{1-t^{|m_i| \beta_i}}{1-t^{\beta_i}} , 
\frac{1-t^{|m_i| \beta_i}}{1-t^{|m_i|}} , 
\right\} , \\
&G^i_j(t^{\beta_i})(1-t^{|m_i|})\left\{
(1-t^{|m_i|}) , 
(1-t^{\beta_i}) , 
\frac{1-t^{|m_i| \beta_i}}{1-t^{\beta_i}} , 
\frac{1-t^{|m_i| \beta_i}}{1-t^{|m_i|}}
\right\} ,\\
&(1 - t^{|m_i|})^{l_i-1}\left\{
(1-t^{|m_i|}) , 
(1-t^{\beta_i}) , 
\frac{1-t^{|m_i| \beta_i}}{1-t^{\beta_i}} , 
\frac{1-t^{|m_i| \beta_i}}{1-t^{|m_i|}}
\right\} .
\end{split}
\right.
\end{equation*}
Since the ideals $E_1(K_1^{m_1 , n_1})$ and $E_1(K_2^{m_2 , n_2})$ coincide,
each generator of $E_1(K_1^{m_1 , n_1})$ is a linear combination of generators of 
$E_1(K_2^{m_2 , n_2})$ over $\mathbb{Z}[t , t^{-1}]$. 
Thus, for instance, we have
\begin{equation*}
\begin{split}
\Delta_{K _2}(t^{\beta_2})(1-t^{m_2}) =&   
\Delta_{K_1}(t^{\beta_1}) \times \\
&\left\{
P_1(t) (1-t^{|m_1|}) +
P_2(t) (1-t^{\beta_1}) +
P_3(t) \frac{1-t^{|m_1| \beta_1}}{1-t^{\beta_1}} + 
P_4(t) \frac{1-t^{|m_1| \beta_1}}{1-t^{|m_1|}}
\right\} \\
&+\sum_j G^1_j(t^{\beta_1})(1-t^{|m_1|}) \times
\biggl\{P^j_5(t) (1-t^{|m_1|}) + 
P^j_6(t) (1-t^{\beta_1}) + \\
&\left. P^j_7(t)  \frac{1-t^{|m_1| \beta_1}}{1-t^{\beta_1}}
+P^j_8(t)  \frac{1-t^{|m_1| \beta_1}}{1-t^{|m_1|}}\right\} 
 + (1 - t^{|m_1|})^{l_1-1}  \times \\
&\hspace{-10pt}\left\{
P_{9}(t) (1-t^{|m_1|}) +
P_{10}(t) (1-t^{\beta_1}) +
P_{11}(t) \frac{1-t^{|m_1| \beta_1}}{1-t^{\beta_1}} + 
P_{12}(t) \frac{1-t^{|m_1| \beta_1}}{1-t^{|m_1|}}
\right\},
\end{split}
\end{equation*}
where $P_k(t) , P^j_k(t) \in \mathbb{Z}[t , t^{-1}]$ are Laurent polynomials.
Since $m_1$ and $\beta_1$ are relatively prime, $\beta_1$ is odd. 
Substituting $-1$ for the above equation's $t$, we have
 $$
\Delta_{K _1}(-1)
( 2P_2(-1) + \beta_1P_4(-1)) = 
\Delta_{K _2}((-1)^{\beta_2})(1-(-1)^{|m_2|}).
$$
If $m_2$ is even then $2P_2(-1) + \beta_1P_4(-1) = 0$ since $\Delta_K(-1) \neq 0$ for any 1-knot $K$.
If $m_2$ is odd then 
$$
\cfrac{\Delta_{K _2}(1)}{\Delta_{K _1}(-1)} =
\cfrac{1}{\Delta_{K _1}(-1)} = 
\cfrac{2P_2(-1) + \beta_1P_4(-1)}{2} \in \cfrac{\mathbb{Z}}{2}
$$
since $\beta_2$ can be chosen to be even and $\Delta_K(1)=1$ for any 1-knot $K$.

The same arguments for other generators  
$
\Delta_{K _2}(t^{\beta_2})\left\{
(1-t^{\beta_2}) , 
\cfrac{1-t^{|m_2|\beta_2}}{1-t^{\beta_2}} , 
\cfrac{1-t^{|m_2|\beta_2}}{1-t^{|m_2|}}
\right\} 
$
of $E_1(K_2^{m_2 , n_2})$ 
lead the following table:
\begin{center}
\begin{tabular}{c|c|c|c|c}
The generators of $E_1(K_2^{m_2 , n_2})$ &$1-t^{|m_2|}$ & $1-t^{\beta_2}$ &$\cfrac{1-t^{|m_2|\beta_2}}{1-t^{\beta_2}}$ & $\cfrac{1-t^{|m_2|\beta_2}}{1-t^{|m_2|}}$\\
\hline  
$(m_2 , \beta_1 , \beta_2) = (e,o,o)$ & P & $\mathbb{Z}/{2}$ & P & $\mathbb{Z}/{\beta_2}$\\ 
\hline
$(m_2 , \beta_1 , \beta_2) = (o,o,e)$ & $\mathbb{Z}/{2}$ & P & $\mathbb{Z}/{|m_2|}$ & P
\end{tabular}
\end{center}

The second column explains the case of the generator $\Delta_{K _2}(t^{\beta_2})(1-t^{|m_2|})$, which we have seen above.
If $(m_2 , \beta_1 , \beta_2)=(e,o,o)$, where $e$ and $o$ stands for even and odd, respectively,
then we have $2P_2(-1)+\beta_1P_4(-1)=0$, which is represented ``P" in the table.
Note that we cannot get any information of $\Delta_{K_1}(t)$ and $\Delta_{K_2}(t)$ from the information ``P".
If $(m_2 , \beta_1 , \beta_2)=(o,o,e)$ then $\frac{\Delta_{K _2}((-1)^{\beta_2})}{\Delta_{K _1}(-1)} \in \mathbb{Z}/2$, which is represented by ``$\mathbb{Z}/2$".
The $3$rd, $4$th, and $5$th columns are filled by the same way for the other generators 
$\Delta_{K_2}(t^{\beta_2})(1-t^{\beta_2})$, 
$\Delta_{K_2}(t^{\beta_2})\frac{1-t^{|m_2| \beta_2}}{1-t^{\beta_2}}$, 
$\Delta_{K_2}(t^{\beta_2})\frac{1-t^{|m_2| \beta_2}}{1-t^{|m_2|}}$, respectively.

In the case where $m_2$ is odd, from this table, we have $\frac{\Delta_{K _2}((-1)^{\beta_2})}{\Delta_{K _1}(-1)}\in \mathbb{Z}/2 \cap \mathbb{Z}/{|m_2|}$.
We may choose $\beta_2$ to be even, then $|\Delta_{K_1}(-1)|=1$ holds.

In the case where $m_2$ is even, since $\beta_2$ is odd, by the same argument we have $\frac{\Delta_{K_2}(-1)}{\Delta_{K_1}(-1)} \in \mathbb{Z}$. 
By applying the same argument with exchanging $K_1$ and $K_2$, we have $\frac{\Delta_{K _1}(-1)}{\Delta_{K _2}(-1)}\in \mathbb{Z}$.
Thus $|\Delta_{K_2}(-1)|=|\Delta_{K_1}(-1)|$.
\end{proof}

\end{document}